\theoremstyle{plain}
\newtheorem{theorem}{Theorem}
\newtheorem*{theorem*}{Theorem}
\title{Harmonic Oscillators of Mathematical Biology:\\ Many Faces of a Predator-Prey Model}
\author{Sergiy Koshkin* and Isaiah Meyers\\
\\
*Corresponding author\\
Department of Mathematics and Statistics\\
University of Houston-Downtown\\
One Main Street\\
Houston, TX 77002\\
e-mail: koshkins@uhd.edu}
\date{}
\begin{document}

\maketitle

\begin{abstract}
We show that a number of models in virus dynamics, epidemiology and plant biology can be presented as ``damped" versions of the Lotka-Volterra predator-prey model, by analogy to the damped harmonic oscillator. The analogy deepens with the use of Lyapunov functions, which allow us to characterize their dynamics and even make some estimates.
\bigskip

\textbf{Keywords}: Lotka-Volterra, Lyapunov function, non-linear oscillator, virus dynamics, plant growth
\bigskip

\textbf{MSC}: 37N25 34C15 37G15 34C26 92C60 92C80

\end{abstract}

In the early 1920s Lotka studied a system of two non-linear differential equations for oscillating concentrations in a chemical reaction \cite{L}, and in 1926 Volterra used the same system to explain the rise in predatory fish populations in the Adriatic sea during World War I \cite{V}. The Lotka-Volterra predator-prey model, as it came to be known, became a paradigmatic example of oscillatory behavior in biology, just as the harmonic oscillator is in physics. In 1927 Kermack and McKendrick applied a special case of it, now called the SIR model, to the spread of infection during epidemics \cite{KK}. Here the ``predators" represented infectious individuals, and those susceptible to the infection were the ``prey". Already in 1929 Soper needed a variation of it, with the inclusion of natural birth rates of the susceptible population, to model measles epidemics in London \cite{Sop}. Further variations and extensions of the Lotka-Volterra model are encountered in many other biological situations, and are generically called predator-prey models \cite{Ber,BJM,CPPW,GKT,HS}.

In this paper, we will study one model of this class that is particularly closely related to the original, we call it the {\it ``damped" predator-prey model}:
\begin{equation}\label{DampedPP}
\begin{cases}\frac{dx}{dt}=\delta - \alpha x - \beta xy\\
\frac{dy}{dt}=\gamma xy- \sigma y.\end{cases}
\end{equation}
When $\beta=\gamma$ and $\delta=0$ this is the classical Lotka-Volterra model, and when also $\alpha=0$, it becomes the SIR model. In the Soper's model, $\alpha=0$ but $\delta>0$. Although system \eqref{DampedPP} has been extensively studied, its connection to the Lotka-Volterra model seems to have escaped attention. We will be able to characterize its behavior in detail by exploiting just this connection. The system, and its extensions, exhibit interesting dynamics that will allow us to introduce and use advanced modeling concepts and tools, like phase portraits, invariant sets, first integrals, separatrices, trapping regions, omega-limit sets, domains of attraction and bifurcations. But the main tool, exploited throughout the paper, will be Lyapunov functions that allow us to prove global stability of solutions to the system, and even obtain some estimates on them. Lyapunov functions are generalizations of energy functions in physics. We will show that for positive parameters system \eqref{DampedPP} behaves somewhat like a {\it damped} harmonic oscillator, an oscillator whose amplitudes decrease due to loss of energy. 
After a general analysis of the system, and its study by means of Lyapunov functions, we will look closely at some of its many applications. First, we will relate it to the classical Lotka-Volterra model, and trace its bifurcation from damped oscillations to predator extinction. Then we will look at a basic model of virus dynamics \cite{BJM}, where the ``predators" are virus producing cells, and the ``prey" are cells susceptible to the infection, and its modification that led to considering $\beta\neq\gamma$. Finally, in a more unexpected incarnation, reminiscent of Lotka's, we derive \eqref{DampedPP} from a simple model of plant growth dynamics, where $x$ and $y$ are concentrations of a nutrient and a growth hormone, respectively. The model itself is an interesting $3$-dimensional extension of  \eqref{DampedPP}. 

We feel that the ``damped" predator-prey model makes for an excellent guided exploration project in a mathematical modeling or differential equations course. Its analysis helps introduce many techniques that are not typically seen in standard examples, and it can be easily modified to model more complicated behavior where they are indispensable. Some variations, including models with limit cycles \cite{GKT}, $3$-dimensional extensions of the virus dynamics model \cite{BJM}, and plant growth models that make different simplifying assumptions about water transport \cite{BV}, can be used as a basis for student research projects. 

\section{Equilibria and invariant regions}

The first step when dealing with a system of differential equations like \eqref{DampedPP} is to find its equilibria, which are solutions that do not change with time. This means $\frac{dx}{dt}=\frac{dy}{dt}=0$. We find two of them: $\left(\frac{\delta}{\alpha}, 0 \right)$ and $\left(\frac{\sigma}{\gamma},\frac{a}{\beta}\right)$, where $a:=\frac{\gamma\delta}{\sigma}-\alpha$. In addition to assuming that all parameters of the system are positive, we shall also assume for now that $\gamma\delta>\sigma\alpha$. This makes $a>0$ and the both equilibria are in the first quadrant. To get an idea about the overall behavior of the trajectories, i.e. of the {\it flow}, it is instructive to plot the right hand side of \eqref{DampedPP} as vectors attached to points in the $x$-$y$ plane, the \textit{vector field} of the system, Figure \ref{SlopePP}(a). Trajectories must be tangent to the slope field vectors at every point.

\begin{figure}[!ht]
\begin{center}
(a)\ \ \ \includegraphics[scale=0.47]{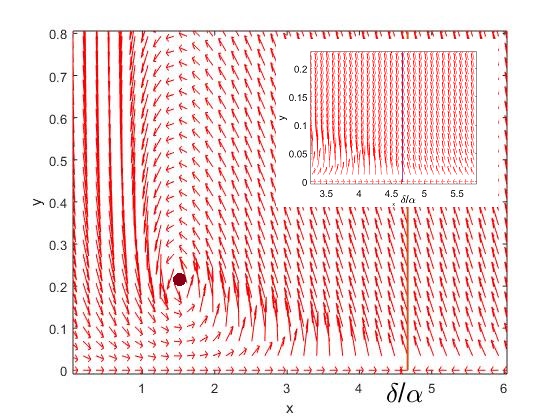} \hspace{0.1in} 
(b)\ \ \ \includegraphics[scale=0.35]{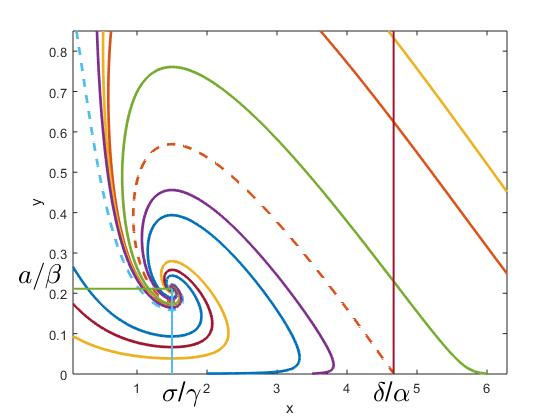}\par
\end{center}
\caption{\label{SlopePP} (a) Vector field of \eqref{DampedPP} for $\delta>0$ with inset figure of its behavior near $(\delta/\alpha,0)$; (b) Flow of \eqref{DampedPP} in the first quadrant. Dashed lines are separatrices.}
\end{figure}
Note that the field restricted to the $x$-axis is $(\delta-\alpha x,0)$, so parallel to it and pointing towards the equilibrium point $\left(\frac{\delta}{\alpha},0\right)$. Along the $y$-axis the field is $(\delta,-\sigma y)$, and is either parallel to it (if $\delta=0$) or points inside the first quadrant (if $\delta>0$). Since trajectories are directed along the field vectors they can not leave the first quadrant if they start in it. In general, regions that trajectories can not leave are called {\it flow-invariant}. A closer look at the line $x=\frac{\delta}{\alpha}$ for $\delta>0$ shows that the field restricted to it, $\left(-\frac{\beta\delta}{\alpha}y,\left(\frac{\gamma\delta}{\alpha}-\sigma\right)\!y\right)$, always points leftward. This means that the half-strip $H:=\{(x,y)\in\mathbb{R}^{2}|0\leq x\leq\frac{\delta}{\alpha},y\geq 0\}$ is also flow-invariant. 

Moreover, for small $y\neq 0$, the vectors along the line $x=\frac{\delta}{\alpha}$ point away from the equilibrium point $\left(\frac{\delta}{\alpha},0\right)$, which means that this equilibrium is \textit{unstable}. Trajectories with $y\neq 0$ will move away from it no matter how small $y$ is. Conversely, the other equilibrium seems to be \textit{stable}, more precisely, locally asymptotically stable, meaning nearby trajectories flow into it. 

This simple analysis allows us to form a preliminary picture of the flow in the first quadrant depicted on Figure \ref{SlopePP}(b). There is a family of trajectories entering $H$ from the left, through the $y$-axis, and another family entering it from the right, through the line $x=\frac{\delta}{\alpha}$. By continuity, there has to be a special trajectory separating these two families. Such trajectories are called \textit{separatrices} and the behavior of the vector field along the $y$-axis suggests that this one should be asymptotic to it. There has to be another separatrix separating trajectories passing under the stable equilibrium and going up from those entering $H$ from the right. This one seems to be ``originating" (at $t\to-\infty$) at the unstable equilibrium. The trajectories entering $H$ from the right are squeezed between these two separatrices.

\section{Harmonic oscillator, damped and undamped}

As compelling as the above picture is, it is only a picture. 
To move past mere illustrations, it will be helpful to look at the physical cousin of our model, the (damped) harmonic oscillator. To save space, from now on we will denote time derivatives by dots, e.g.  $\dot{f}:=\frac{df}{dt}$ and $\ddot{f}:=\frac{d^2f}{dt^2}$. The harmonic oscillator equation $\ddot{y}+by=0$, and its damped version $\ddot{y}+a\dot{y}+by=0$, are commonplace in mathematical physics, describing phenomena as diverse as springs, pendula, RLC electric circuits, tuning forks, atoms in a solid, etc. \cite{Jha}. Setting $x:=\dot{y}$ we present the oscillator as a $2$-dimensional system
\begin{equation}\label{undampedOsc}
\begin{cases}\dot{x}=-ax-by\\
\dot{y}=x\,.\end{cases}
\end{equation}
It is instructive to visualize solutions $(x(t),y(t))$ to this system as moving points in the $x$-$y$ plane called the {\it phase plane} of the system. When $a=0$ equations of the curves they traverse can be found by dividing the second equation by the first, separating variables and integrating: 
\begin{eqnarray}
\nonumber \frac{dy}{dx}&=&\frac{\dot{y}}{\dot{x}}
=-\frac{x}{by}\,.
\end{eqnarray}
The result, called the {\it first integral} of \eqref{undampedOsc} with $a=0$, is
\begin{eqnarray}
\nonumber x^{2}+by^{2}&=&C.
\end{eqnarray}
The curves it defines for $C>0$ are ellipses centered at the origin, and the points move along them counterclockwise. These ellipses are level sets of the function $V(x,y):=x^{2}+by^{2}$, which represents the total energy of the oscillator.
\begin{figure}[!ht]
\begin{center}
(a)\ \ \ \includegraphics[scale=0.35]{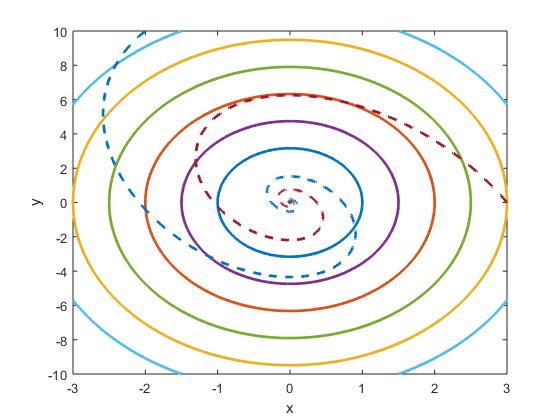} \hspace{0.1in}
(b)\ \ \ \includegraphics[scale=0.35]{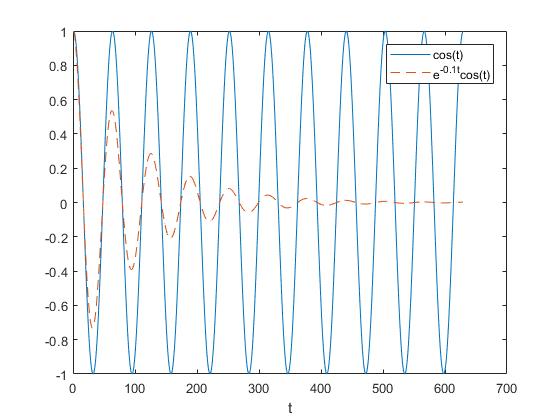}\par
\end{center}
\caption{\label{HarmOsc} The undamped (solid) and damped (dashed) trajectories of the harmonic oscillator: (a) in the phase plane; (b) as functions of time.}
\end{figure}

When the damping coefficient $a>0$, the oscillator is losing energy (say, to friction), and the phase trajectories spiral into the origin, always staying within every ellipse they enter, Figure \ref{HarmOsc}(a). Computing the derivative of energy $V(x,y)$ along the trajectories of the damped system \eqref{undampedOsc} we find: 
$$
\frac{d}{dt}V(x(t),y(t))=\frac{\partial V}{\partial x}\dot{x}+\frac{\partial V}{\partial y}\dot{y}=-2ax^{2}\leq0\,.
$$
As expected, $V(x(t),y(t))$ is a decreasing function of time this is why the damped trajectories can enter the ellipses but never exit them. This is a prototypical example of a Lyapunov function for systems of differential equations.

\section{Limit points and Lyapunov functions}

To appreciate the utility of Lyapunov functions it is instructive to imagine what can generally happen to trajectories. Going back to our model \eqref{DampedPP}, let us abbreviate $p(t)=(x(t),y(t))$ the solution starting at a point $p_{0}=(x_{0},y_{0})$ in the first quadrant. What our intuitive picture suggests is that $p(t)\to p_{*}:=\left(\frac{\sigma}{\gamma},\frac{a}{\beta}\right)$ when $t\to\infty$. But nothing we said so far precludes $p(t)$ from ``escaping to $\infty$" instead, e.g. moving up $H$ indefinitely. Something yet more curious can happen. Rather than spiraling into $p_{*}$, our $p(t)$ may get stuck cycling around it forever.
\begin{figure}[!ht]
\begin{center}
(a)\ \ \ \includegraphics[scale=0.49]{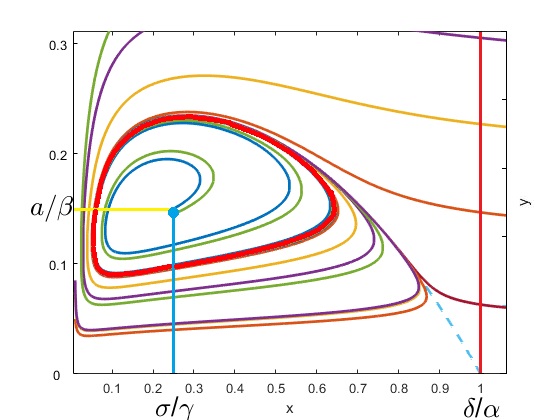} \hspace{0.1in} 
(b)\ \ \ \includegraphics[scale=0.37]{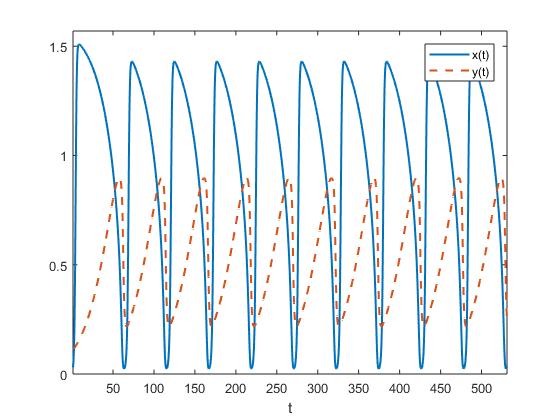}\par
\end{center}
\caption{\label{limitCycle} Holling-Tanner system: (a) limit cycle in the phase plane; (b) phase variables as functions of time.}
\end{figure}
This would mean having yet another separatrix, a closed curve that  separates the inside and the outside trajectories. Such separatrices are called \textit{limit cycles}. Limit cycles do indeed occur in some predator-prey models, like the Holling-Tanner system \cite{GKT}, see Figure \ref{limitCycle}. They correspond to solutions that display a pattern of sustained oscillations.

How do we detect (or rule out) such behavior? This is where Lyapunov functions comes in. With their help we can rule out escapes to $\infty$ and limit cycles all at once. If trajectories escaped to $\infty$ the Lyapunov function would eventually have to grow along them, which it can not. If there was a limit cycle its time derivative would have to vanish on it, and this can be checked.

Asymptotic behavior of trajectories is reflected by their limit points. For a trajectory that starts at $p_{0}$ the set of such points is called the {\it omega-limit set} of $p_{0}$ \cite[2.6]{HS}:
\begin{equation*}
\omega(p_{0}):=\{q\in\mathbb{R}^{2}\,|\,p(t_{k})\to q \textrm{ for some } t_{k}\to\infty\}.
\end{equation*}
It is easy to show that $\omega(p_{0})$ is always closed, i.e. contains its own limit points, and flow-invariant. It is non-empty if $p(t)$ is bounded, and then it is itself bounded. If $p(t)$ escapes to $\infty$ this set will be empty. And if $p(t)$ approaches a limit cycle, $\omega(p_{0})$ will contain the whole  cycle. 

Suppose $p(t)$ stays within some region $U$ for all $t\geq0$, a {\it trapping region}, and let $V(p)$ be a function defined on $U$. Consider the time derivative of $V$ along the trajectories of the system: 
$$
\dot{V}(x,y):=\nabla V\cdot(\dot{x},\dot{y})=\frac{\partial V}{\partial x}\dot{x}+\frac{\partial V}{\partial y}\dot{y}\,.
$$
Then $\frac{d}{dt}V(p(t))=\dot{V}(p(t))$ and if $\dot{V}(p)\leq0$ on $U$ then $V(p(t))$ is non-increasing along the trajectories. A version of the theorem due to Lyapunov \cite[2.6]{HS} tells us more. 
\begin{theorem}[Lyapunov]\label{Lyap}
Let $V$ be a continuously differentiable function defined on a region $U$ such that $\dot{V}(p)\leq0$. If $p(t)$ stays within $U$ for all $t\geq0$ then $\dot{V}$ vanishes on the limit points of $p(t)$ that are within $U$, i.e. $\omega(p_{0})\cap U\subseteq\dot{V}^{-1}(0)$.
\end{theorem}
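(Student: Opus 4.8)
The plan is to run the classical argument behind Lyapunov's theorem (essentially LaSalle's invariance principle). The starting point is that $V(p(t))$ is monotone: since $p(t)\in U$ for all $t\ge 0$ and $\dot V\le 0$ on $U$, we have $\frac{d}{dt}V(p(t))=\dot V(p(t))\le 0$, so $V(p(t))$ is non-increasing in $t$. If $\omega(p_{0})\cap U=\varnothing$ there is nothing to prove, so fix any $q\in\omega(p_{0})\cap U$ and pick $t_{k}\to\infty$ with $p(t_{k})\to q$. Continuity of $V$ gives $V(p(t_{k}))\to V(q)$, and a non-increasing function of $t$ possessing a convergent subsequence as $t_{k}\to\infty$ must itself converge to the same finite value; hence $\lim_{t\to\infty}V(p(t))$ exists and equals $V(q)$. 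In particular this limit does not depend on which $q\in\omega(p_{0})\cap U$ was chosen, so $V$ is constant on $\omega(p_{0})\cap U$; call that constant $c$.

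Next I would show $\dot V$ vanishes at each such $q$ by contradiction. Suppose $\dot V(q)\ne 0$; by the hypothesis $\dot V\le 0$ this forces $\dot V(q)<0$. Let $\phi_{t}(q)$ denote the solution of the system with $\phi_{0}(q)=q$. Because $\omega(p_{0})$ is flow-invariant, $\phi_{t}(q)\in\omega(p_{0})$ for all $t$ in its interval of existence, and because $U$ is a region containing $q$, we also have $\phi_{t}(q)\in U$ for all sufficiently small $t\ge 0$. Thus $\phi_{t}(q)\in\omega(p_{0})\cap U$ for small $t\ge 0$, where $V\equiv c$; so $t\mapsto V(\phi_{t}(q))$ is constant near $t=0$ and its derivative there is zero. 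But that derivative equals $\dot V(\phi_{0}(q))=\dot V(q)<0$ --- a contradiction. Hence $\dot V(q)=0$, i.e. $\omega(p_{0})\cap U\subseteq\dot V^{-1}(0)$.

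The substantive ingredients are the two facts recorded just before the theorem --- that $\omega(p_{0})$ is closed and flow-invariant --- together with the monotone-convergence observation; the rest is bookkeeping. The one step to be careful about is ``running the flow forward from a limit point'': it is legitimate only because $\omega(p_{0})$ is invariant (so we remain among limit points of $p_{0}$) and $U$ is a neighborhood of $q$ (so $V$ and $\dot V$ are still available, and $\dot V\le 0$ still holds, along $\phi_{t}(q)$ for small $t$). If one only knows that the particular trajectory $p(t)$ stays in $U$ rather than that $U$ is open, the same argument still works provided $q$ lies in the interior of $U$, which is the only case used in the applications below.
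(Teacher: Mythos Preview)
The paper does not supply its own proof of this theorem; it is stated as a known result with a citation to \cite[2.6]{HS}. Your argument is the standard LaSalle-type proof and is correct: monotonicity of $V(p(t))$ forces it to converge to a finite limit $c$ once a single $\omega$-limit point in $U$ exists, so $V\equiv c$ on $\omega(p_0)\cap U$; then flow-invariance of $\omega(p_0)$ together with openness of the region $U$ lets you differentiate $V$ along the orbit through $q$ and conclude $\dot V(q)=0$. Your closing caveat about needing $q$ to lie in the interior of $U$ is well taken and is exactly how the theorem is used later in the paper (where $U$ is the closed Lotka--Volterra oval and the relevant $\omega$-limit points are interior).
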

In the classical case considered by Lyapunov $V$ was required to have a global minimum, a single point where $\dot{V}=0$ which also happens to be an equilibrium of the system. But these conditions are so demanding that not even the damped harmonic oscillator satisfies them all! Indeed, $\dot{V}=-2ax^{2}=0$ on the entire line $x=0$, not just at $(0,0)$. We will call a function $V$ a \textit{$Lyapunov$ $function$} if it simply satisfies $\dot{V}\leq0$, i.e. if it does not increase along the trajectories. Figure \ref{LyapBowl}\,(a) illustrates how Lyapunov functions work. It depicts a trajectory $p(t)$ together with the value of $V(p(t))$, i.e. a trajectory lifted to the graph of $V$. The graph of $V$ is bowl shaped, and the fact that $V$ decreases along the trajectories means that their lifts are funneled towards its bottom, ideally the global minimum.

\begin{figure}[!ht]
\begin{center}
(a)\ \ \ \includegraphics[scale=0.48]{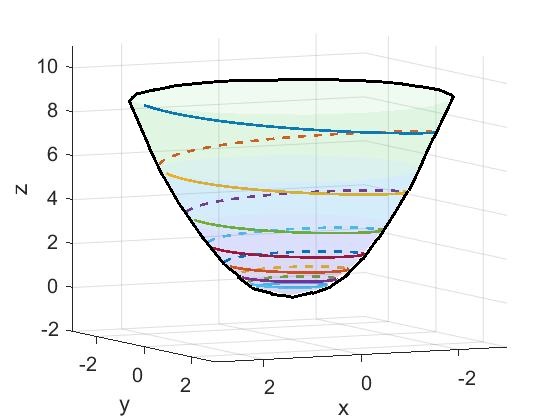} \hspace{0in} 
(b)\ \ \ \includegraphics[scale=0.48]{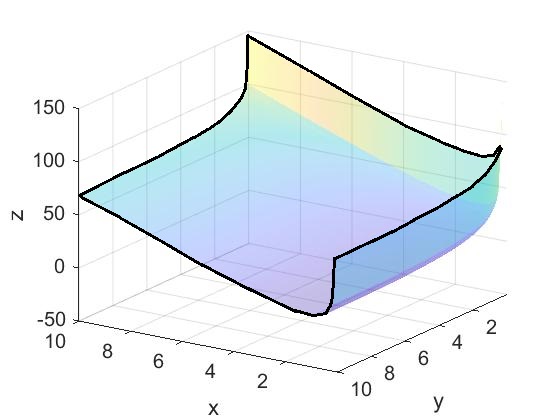}\par
\end{center}
\caption{\label{LyapBowl} (a) Lyapunov lifted trajectory of the damped harmonic oscillator; (b) Lotka-Volterra Lyapunov function.}
\end{figure}

\section{Lyapunov function for the ``damped" predator-prey model}

Let us start with the predator-prey model of Lotka-Volterra, a biological analog of the undamped harmonic oscillator:
\begin{equation}\label{Vsys}
\begin{cases}\dot{x}=ax-\beta xy\\
\dot{y}=\gamma xy -\sigma y.\end{cases}
\end{equation}
Here $x$ is the number of prey fish and $y$ the number of their predator. The prey multiplies at the rate $ax$, proportional to the size of its population, and is consumed at a rate proportional to its encounters with the predator. It is a common assumption in population dynamics that these encounters are random and therefore proportional to the product of the population numbers $xy$. This is called the mass action principle. Conversely, the population of predators multiplies proportionally to $xy$ and has the death rate $\sigma y$.  Unlike \eqref{undampedOsc} this model is nonlinear, but we can still find the first integral by the same trick:
\begin{equation*}
\frac{dy}{dx}=\frac{\gamma xy -\sigma y}{ax-\beta xy}
=\frac{\gamma-\frac{\sigma}{x}}{\frac{a}{y}-\beta}.
\end{equation*}
Separating the variables and integrating, we compute:
\begin{equation}\label{firstInt}
\gamma x-\sigma\ln x+\beta y-a\ln y=C.
\end{equation}
We assume that all the parameters are positive and only look at the positive values of $x$ and $y$ since neither populations nor concentrations can be negative. Then on the left we have a sum of two one-variable functions that, like parabolas, are convex down and have a global minimum. The resulting picture in the $x$-$y$ plane is topologically similar to Figure \ref{HarmOsc}(a), but the ellipses are replaced by ovals confined to the first quadrant and the center is no longer at the origin, see Figure \ref{PredExt}(a). 

For our model \eqref{DampedPP} we shall take Lotka-Volterra's first integral as a Lyapunov function template (such a template with free parameters is often called {\it ansatz}), but without specifying yet what $a$ is:
\begin{equation}\label{Lyapunov1}
V(x,y)=\gamma x - \sigma\ln x + \beta y - a\ln y.
\end{equation}
Then we differentiate,
$$
\nabla V=\left(\gamma-\frac{\sigma}{x},\beta-\frac{a}{y}\right),$$
and split the vector field of \eqref{DampedPP} as follows:
$$
X:=\left(\delta - \alpha x - \beta xy,\gamma xy- \sigma y\right)
=\left(ax - \beta xy,\gamma xy- \sigma y\right)
+\left(\delta - \alpha x - ax,0\right)=:X_0+Z.
$$
The first field $X_0$ is from the Lotka-Volterra system \eqref{Vsys}. We know that $\nabla V\cdot X_0=0$, because $V$ is constant along its trajectories, so $\nabla V\cdot X=\nabla V\cdot Z$, and 
\begin{equation*}
\dot{V}(p)=\nabla V\cdot Z
=\left(\gamma-\frac{\sigma}{x}\right)\left(\delta - (a+\alpha)x\right)
=-\frac{\gamma(a+\alpha)}{x}\left(x-\frac{\sigma}{\gamma}\right)\left(x-\frac{\delta}{a+\alpha}\right).
\end{equation*}
Now it becomes clear how to select $a$, which we left indeterminate, to make $V$ a Lyapunov function. If we set $\frac{\sigma}{\gamma}=\frac{\delta}{a+\alpha}$, and $a>0$, then
\begin{equation}\label{LyapunovDer1}
\dot{V}(x,y)=-\frac{\gamma^2\delta}{\sigma x}\left(x-\frac{\sigma}{\gamma}\right)^2\leq0.
\end{equation}
\begin{figure}[!ht]
\begin{center}
(a)\includegraphics[scale=0.39]{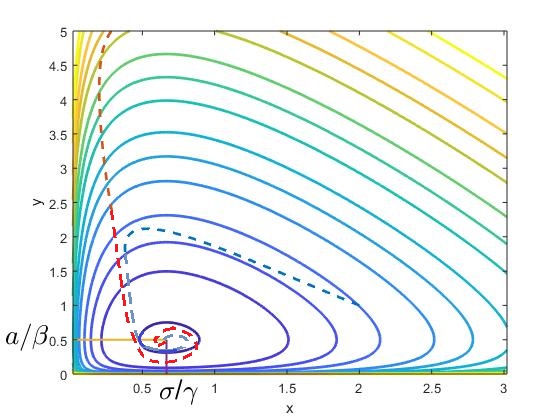}
(b)\ \includegraphics[scale=0.19]{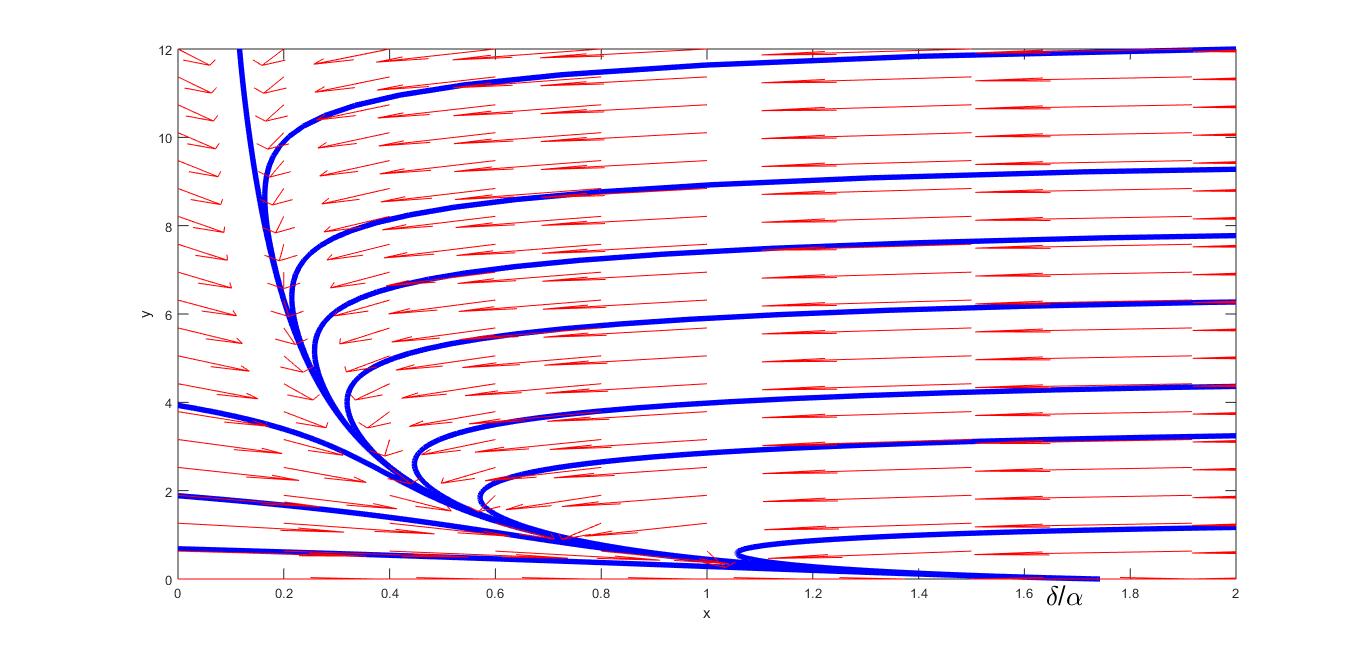}
\caption{\label{PredExt} Flows for (a) the Lotka-Volterra model (solid) and its ``damped" version (dashed) with $a>0$; (b) predator extinction with $a<0$.}
\end{center}
\end{figure}
This means that $a:=\frac{\gamma\delta}{\sigma}-\alpha$, the same $a$ we introduced when computing equilibria. With our additional restriction $\gamma\delta>\sigma\alpha$, we have $a>0$. We will discuss what happens when it is negative shortly. Note that moving from system \eqref{DampedPP} to \eqref{Vsys} does not amount to simply setting a single ``damping" coefficient to $0$, as it was with the harmonic oscillator, the relation between the ``damped" predator-prey system and its ``undamped" version is more complicated. We should also mention that there are other ways of ``damping" the Lotka-Volterra model \cite{Gar}.

\section{Global attraction}

Now that we have a Lyapunov function, to apply Theorem \ref{Lyap} we need to choose a trapping region $U$. It is tempting to take the entire first quadrant as $U$. But this would not work for two reasons. First, we can not include the axes since our $V$ from \eqref{Lyapunov1} is not defined on them. Second, even without the axes $U$ is unbounded, so we can not rule out $\omega(p_{0})$ being empty. A resolution is to take as $U$ the Lotka-Volterra oval from Figure \ref{PredExt}(a) with $p_{0}$ on its boundary, i.e. 
$$
U_{p_{0}}:=\{p\in\mathbb{R}^{2}\,|\,V(p)\leq V(p_{0})\}\,.
$$
This region is closed, bounded, and $p(t)$ stays within it for all $t\geq 0$ since $V(p(t))$ is non-increasing. Now we can confirm our picture of the flow rigorously.
\begin{theorem}\label{GlobAttr}
Suppose $\alpha,\beta,\gamma,\delta,\sigma>0$ and $\gamma\delta>\alpha\sigma$. Let $p_{*}:=(\frac{\sigma}{\gamma},\frac{a}{\beta})$ be the (stable) equilibrium of system \eqref{DampedPP} with $a:=\frac{\gamma}{\sigma}\delta-\alpha$. Then for any $p_{0}=(x_{0},y_{0})$ with $x_{0},y_{0}>0$ the trajectory starting at $p_0$ converges to $p_*$ at $t\to\infty$.
\end{theorem}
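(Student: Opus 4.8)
The plan is to apply Lyapunov's theorem (Theorem~\ref{Lyap}) with the Lyapunov function $V$ of \eqref{Lyapunov1} on the trapping region $U_{p_0}$, and then to sharpen its conclusion by invoking flow-invariance of the omega-limit set — a LaSalle-type argument.

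First I would verify that $U_{p_0}=\{p\,|\,V(p)\le V(p_0)\}$ is a compact subset of the open first quadrant. Writing $V(x,y)=f(x)+g(y)$ with $f(x)=\gamma x-\sigma\ln x$ and $g(y)=\beta y-a\ln y$, each summand is convex and tends to $+\infty$ as its argument goes to $0^{+}$ or to $+\infty$; here it is essential that $\sigma>0$ and $a>0$, the latter being exactly the hypothesis $\gamma\delta>\alpha\sigma$. Hence every sublevel set of $V$ is closed, bounded, and bounded away from the axes. Since $V(p(t))$ is non-increasing by \eqref{LyapunovDer1}, the trajectory starting at $p_0$ stays in $U_{p_0}$ for all $t\ge 0$; being trapped in a compact set it exists for all $t\ge 0$, and $\omega(p_0)$ is non-empty, closed, flow-invariant, and contained in $U_{p_0}$.

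Next I would apply Theorem~\ref{Lyap} with $U=U_{p_0}$ to get $\omega(p_0)\subseteq\dot V^{-1}(0)$. By \eqref{LyapunovDer1}, on the open first quadrant $\dot V(x,y)=0$ holds precisely on the vertical line $\ell:=\{x=\sigma/\gamma\}$, so $\omega(p_0)\subseteq\ell$. The key step is then to show $\omega(p_0)=\{p_*\}$: for any $q\in\omega(p_0)$, flow-invariance forces the entire trajectory through $q$ to lie in $\omega(p_0)\subseteq\ell$, so $x(t)\equiv\sigma/\gamma$ and hence $\dot x\equiv 0$ along it. Substituting $x=\sigma/\gamma$ into the first equation of \eqref{DampedPP} and using $\delta-\alpha\sigma/\gamma=(\sigma/\gamma)a$ gives $\dot x=(\sigma/\gamma)(a-\beta y)$, so $\dot x=0$ forces $y=a/\beta$, i.e. $q=p_*$. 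Finally, a bounded trajectory whose omega-limit set is a single point must converge to that point, so $p(t)\to p_*$ as $t\to\infty$.

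The main obstacle I anticipate is precisely this last refinement: Theorem~\ref{Lyap} on its own only confines $\omega(p_0)$ to the segment $\ell\cap U_{p_0}$, and it is the invariance of $\omega(p_0)$ (LaSalle's principle) that collapses this to $\{p_*\}$. The compactness bookkeeping for $U_{p_0}$ is routine but indispensable — without it one cannot even guarantee $\omega(p_0)\neq\emptyset$, and the whole scheme collapses.
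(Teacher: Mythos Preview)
Your proposal is correct and follows essentially the same argument as the paper: trap the trajectory in the sublevel set $U_{p_0}$, apply Theorem~\ref{Lyap} to confine $\omega(p_0)$ to the line $x=\sigma/\gamma$, and then use flow-invariance of $\omega(p_0)$ together with the first equation of \eqref{DampedPP} to collapse it to $\{p_*\}$. Your write-up is slightly more explicit about the compactness of $U_{p_0}$ and the algebra behind $\dot x=0\Rightarrow y=a/\beta$, but the route is the paper's own.
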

\begin{proof}
Any $p_{0}$ belongs to a Lotka-Volterra oval $U_{p_{0}}$. Since $U_{p_{0}}$ is flow-invariant, $p(t)\in U_{p_{0}}$ for all $t\geq 0$. Since it is closed and $\omega(p_{0})$ consists of limit points of $p(t_{k})$ we have $\omega(p_{0})\subseteq U_{p_{0}}$. Hence, by the Lyapunov theorem, we have $\omega(p_{0})\subseteq \dot{V}^{-1}(0)$. But $\dot{V}(p)=-\frac{\gamma^{2}\delta}{\sigma x}\left(x-\frac{\sigma}{\gamma}\right)^{2}$, so $\dot{V}^{-1}(0)$ is the line $x=\frac{\sigma}{\gamma}$. But $\omega(p_{0})$ must be flow-invariant, while trajectories of \eqref{DampedPP} move off this line unless $\dot{x}=0$, i.e. unless also $y=\frac{a}{\beta}$. Therefore, $\omega(p_{0})\subseteq\{p_{*}\}$. Since $U_{p_{0}}$ is bounded $\omega(p_{0})$ is non-empty, so $\omega(p_{0})=\{p_{*}\}$  and $p_{*}$ is the limit of $p(t)$.
\end{proof}
If trajectories originating from points of a certain region converge to an equilibrium, this area is called its {\it domain of attraction}. Our theorem says that the interior of the first quadrant is a domain of attraction of $\left(\frac{\sigma}{\gamma},\frac{a}{\beta}\right)$ when $\gamma\delta>\alpha\sigma$. Biologically, this means that the predator and the prey (or the infectious and the susceptible) populations stabilize at the equilibrium values.

\section{Bifurcation to predator extinction}

As we saw, when $\gamma\delta\leq\alpha\sigma$ function \eqref{Lyapunov1} ceases to be a Lyapunov function of the ``damped" predator-prey system. This is not an artifact of its choice. As $a=\frac{\gamma\delta-\alpha\sigma}{\sigma}-\alpha$ decreases, the stable equilibrium, the center of the ovals in Figure \ref{PredExt}(a), moves to the $x$-axis, and merges with the unstable one when $a=0$. Then it moves below the $x$-axis when $a<0$ and is no longer relevant to the first quadrant. As a result, geometry of the trajectories changes dramatically. According to the vector field, nearby trajectories now seem to approach the formerly unstable equilibrium $(0,\frac{\delta}{\alpha})$. This kind of stability change under variation of parameter values is called \textit{transcritical bifurcation} \cite{SS}. 

Since the $x$-coordinate of our suspected attracting equilibrium is $0$ after the bifurcation we will choose $V(x,y)=\gamma x-b\ln x +\beta y$ as our Lyapunov function ansatz, with $b$ to be determined. This is the first integral of a degenerate Lotka-Volterra system \eqref{Vsys} with $a=0$ and $\sigma=b$. This system is of interest in its own right, it is called the SIR (for Susceptible-Infected-Removed individuals) model in epidemiology \cite{AL}, but we will not dwell on it here.
Performing calculations as in the previous section, we find that $b:=\frac{\delta}{\alpha}\gamma$, and then
$$
\dot{V}(x,y)=-\frac{\alpha\gamma}{x}\left(x-\frac{\delta}{\alpha}\right)^{2}-\beta\left(\sigma-\frac{\delta}{\alpha}\gamma\right)y\leq0
$$ 
in the first quadrant, whenever $\gamma\delta\leq\alpha\sigma$. The now familiar argument shows that the interior of the first quadrant is a domain of attraction of $(\frac{\delta}{\alpha},0)$. Figure \ref{PredExt}(b) shows the vector field and the flow of the system after the bifurcation.  Recall that the $y$ coordinate stands for the number of predators, and it is $0$ at the attracting equilibrium. In biological terms, this means that the population of predators is driven to extinction. The growth and the death rates line up so that there is not enough prey to sustain it. This is also similar to the (heavily) damped harmonic oscillator, only there both variables are driven to $0$.

\section{Virus Dynamics}\label{S4}
\begin{figure}[!ht]
\begin{centering}
(a)\ \ \ \includegraphics[width=0.43\textwidth]{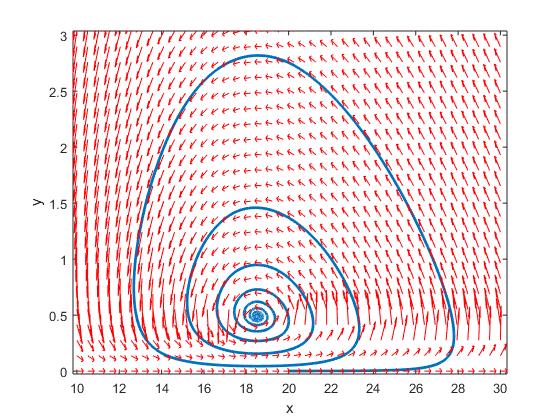} \hspace{0in} (b)\ \ \ \includegraphics[width=0.43\textwidth]{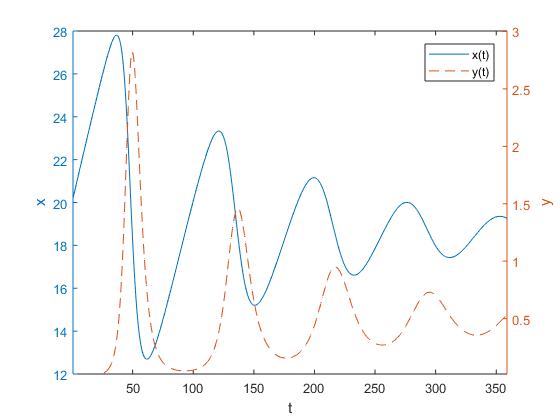}\par
\end{centering}
\caption{\label{VirFlow} (a) Vector field and a trajectory of the basic virus dynamics model ($R>1$);  (b) Time evolution of susceptible and virus producing cells.}
\end{figure}

So far we interpreted the variables of system \eqref{DampedPP} as the population numbers of either predators and prey or infected and susceptible individuals during epidemics. But, as with harmonic oscillators, there are many other interpretations. In this section we will interpret them as the numbers of virus producing and susceptible to infection cells in an organism, rather than individuals. It will also give us an opportunity to illustrate how Lyapunov functions can help with more than just establishing global attraction.

In virus dynamics $\delta$ is the creation rate of susceptible (healthy) cells, $\frac{\delta}{\alpha}$ is the stable density of cells in the absence of a virus, $\frac1{\sigma}$ is the average life span of an infected cell, and $\gamma$ determines the rate of infection. The ratio $R=\frac{\gamma\delta}{\alpha\sigma}$ is then the average number of cells infected by a single virus producing cell, and it is called the {\it basic reproductive ratio} \cite{BJM}. As follows from our analysis in previous sections, if $R\leq1$ the virus is driven to extinction, and if $R>1$ the number of virus producing cells eventually stabilizes at $\frac{a}{\beta}=\frac{\gamma}{\beta}\frac{\delta}{\sigma}(1-\frac1R)$, see Figure \ref{VirFlow}. In other words, $R$ is a {\it bifurcation parameter}, with the bifurcation at $R=1$.

But the basic reproductive ratio is also a measurable quantity taken as a measure of the viral load, the amount of virus present in the organism. In the original basic model of virus dynamics, it was assumed that $\beta=\gamma$, just as in the original Lotka-Volterra model, i.e. the number of susceptible cells decreased only due to infection (and natural dying off), and the number of infected cells increased by the same amount. However, this model predicted unrealistically low viral loads after virus inhibitor treatments that reduced $\gamma$. In one of the modifications proposed in \cite{BJM}, $\beta=\gamma+q$, where $q$ is the rate of virus-induced killing of susceptible cells, e.g. due to the immune response. This gives exactly the ``damped" predator-prey model.

Let us consider the following question: how much of an outbreak can we expect at the peak of infection after a small number of virus producing cells is introduced into a susceptible population? To answer it, look at the oval depicted in Figure \ref{TrapOval}\,(a). It can be characterized by the condition that its right tip is tangent to the line $x=\frac{\delta}{\alpha}$. Since this oval is a level set of $V$ from \eqref{Lyapunov1} the gradient of $V$ is perpendicular to the $y$-axis at the point of tangency. This gives $y=\frac{a}{\beta}$ for the $y$ coordinate of that point, which is equal to the $y$ coordinate of the stable equilibrium. Due to the direction of the field vectors, trajectories that pass to the left or under the oval must enter it since they can not cross the $x$-axis, the $y$-axis, or the line $x=\frac{\delta}{\alpha}$ outward. Once they enter it they can not leave, since $V$ is a Lyapunov function. This means that $y(t)\leq\overline{y}$, where $\overline{y}$ is the $y$-coordinate of the oval's top. Thus, $\overline{y}$ gives a desired estimate on the peak number of virus producing cells.
\begin{figure}[!ht]
\begin{centering}
(a)\ \ \ \includegraphics[width=0.43\textwidth]{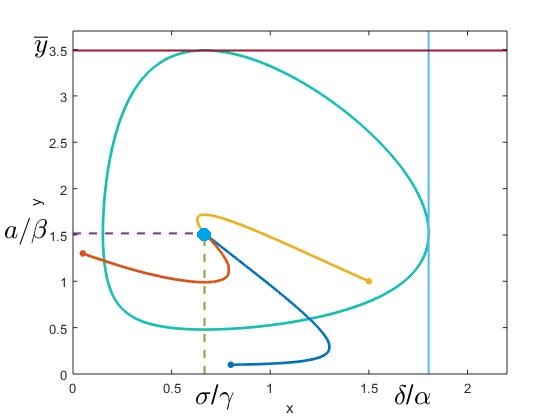} \hspace{0in} (b)\ \ \ \includegraphics[width=0.43\textwidth]{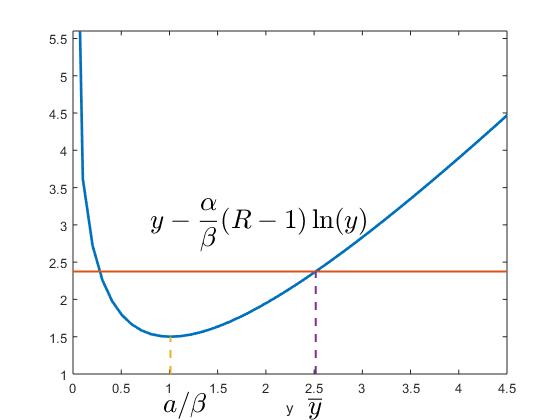}\par
\end{centering}
\caption{\label{TrapOval} (a) Trapping oval for the ``damped" predator-prey system; (b) Solving for the upper bound $\overline{y}$.}
\end{figure}

To find it, note that the oval's equation is $V(x,y)=C$, where $C$ can be found from the condition that the point $(\frac{\delta}{\alpha},\frac{a}{\beta})$ lies on it, i.e. $C=V(\frac{\delta}{\alpha},\frac{a}{\beta})$. Our $\overline{y}$ is the larger solution to $V(\frac{\sigma}{\gamma},y)=V(\frac{\delta}{\alpha},\frac{a}{\beta})$, see Figure \ref{TrapOval}\,(b). We can express this equation more explicitly in terms of $R$:
$$
y-\frac{\alpha}{\beta}(R-1)\ln y=\frac{\sigma}{\beta}(R-1-\ln R)+\frac{\alpha}{\beta}(R-1)\left(1-\ln\frac{\alpha}{\beta}(R-1)\right)\,.
$$
This is a transcendental equation for $\overline{y}$ that can not be solved analytically, but the solution can be easily found numerically for specific parameter values. Now consider the limiting case of large infection rates, when $\gamma\to\infty$ and $\frac{\gamma}{\beta}=\frac{\gamma}{\gamma+q}\to1$. Then $R\to\infty$ and $\frac{\alpha}{\beta}(R-1)\to\frac{\delta}{\sigma}$. The equation for $\overline{y}$ simplifies to 
$$
y-\frac{\delta}{\sigma}\ln y=\frac{\delta}{\alpha}+\frac{\delta}{\sigma}\left(1-\ln\frac{\delta}{\sigma}\right), 
$$
and gives a finite value. In other words, the maximal size of the outbreak remains bounded even for arbitrarily large infection rates!

\section{Plant Growth}\label{PlantGrowth}

Let us switch from viral infections to growth of plants. Bessonov and Volpert proposed a model of early shoot growth from the seed that involves water flow transport of a nutrient to the top of the shoot, where a growth hormone regulates the creation of new cells \cite{BV}. After neglecting diffusion and making simplifying assumptions to eliminate the convection equation the model can be reduced to a three-dimensional system:
\begin{equation}\label{BVmodel}
\begin{cases}\dot{x}=\frac{1}{L}(v-\gamma xy)\\
\dot{y}=\gamma xy-\sigma y\\
\dot{L}=f(y).
\end{cases}
\end{equation}
Here $x$ and $y$ are the nutrient and the hormone concentrations, $L$ is the length of the shoot, $v$ is the water flow speed, $\gamma$ is the rate of hormone production and $\sigma$ is the rate of its consumption. The $\frac{1}{L}$ factor accounts for the dilution of the nutrient over the longer columns of water in longer shoots. The growth function $f(y)$ is a non-negative monotone increasing threshold function, i.e. it introduces a threshold level $y_f$ the hormone concentration has to reach for the growth rate to be non-zero, Figure \ref{ThreshFun}(a). 
\begin{figure}[!ht]
\begin{centering}
(a)\ \ \ \includegraphics[width=0.43\textwidth]{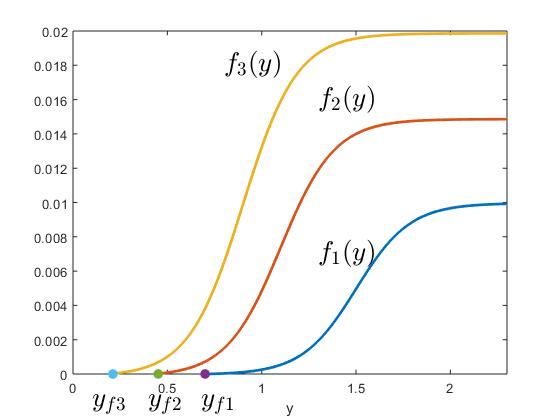} \hspace{0in} (b)\ \ \ \includegraphics[width=0.43\textwidth]{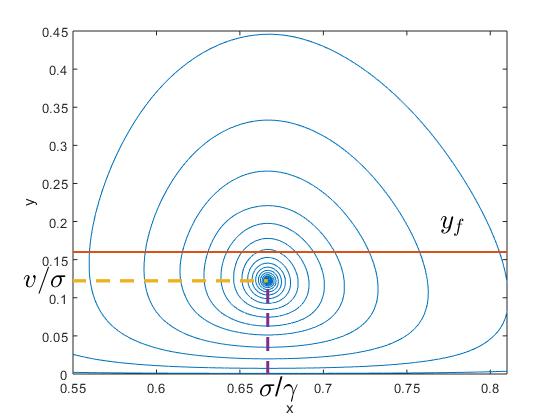}\par
\end{centering}
\caption{\label{ThreshFun} (a) Threshold functions for the plant growth model; (b) Phase trajectory of \eqref{BVmodel} projected to the $x$-$y$ plane.}
\end{figure}
If $y\leq y_f$ then $L$ is constant and the dynamics reduces to the first two equations. They are of the form \eqref{DampedPP} with $\alpha=0, \beta=\frac{\gamma}{L}$, and $\delta=\frac{v}{L}$. This reduced system has only one equilibrium in the first quadrant, $(\frac{\sigma}{\gamma},\frac{v}{\sigma})$. If $\frac{v}{\sigma}\leq y_f$  then $(\frac{\sigma}{\gamma},\frac{v}{\sigma}, L_*)$ is an equilibrium of the full system \eqref{BVmodel} with any fixed $L=L_*$, see Figure \ref{ThreshFun}(b). If not, the system has no equilibria. 
\begin{figure}[!ht]
\begin{centering}
(a)\ \ \ \includegraphics[width=0.43\textwidth]{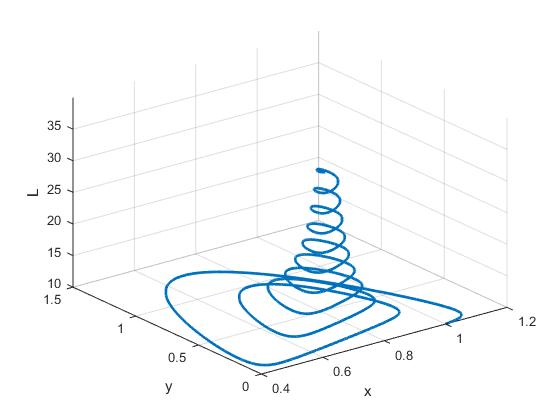} \hspace{0in} (b)\ \ \ \includegraphics[width=0.43\textwidth]{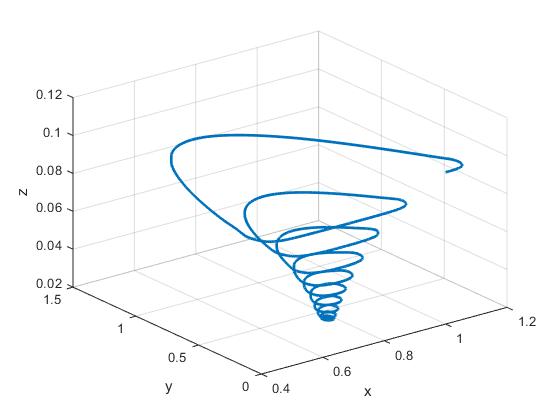}\par
\end{centering}
\caption{\label{3DPPlane} Phase trajectories of: (a) system \eqref{BVmodel};  (b)  system \eqref{BVzmodel}.}
\end{figure}
Since $L$ can grow without a bound it is convenient to use $z=\frac{1}{L}$ as a variable instead, which converts \eqref{BVmodel} into 
\begin{equation}\label{BVzmodel}
\begin{cases}\dot{x}=vz-\gamma zxy\\
\dot{y}=\gamma xy-\sigma y\\
\dot{z}=-f(y)z^2.
\end{cases}
\end{equation}
Now growth of $L$ to infinity is replaced by convergence of $z$ to $0$, Figure \ref{3DPPlane}.

In three dimensions dynamic behavior can be even more diverse than in the plane. In addition to escapes to infinity and limit cycles we can, in principle, encounter chaotic behavior with omega-limit set being a fractal. This is what happens in the famous example of Lorenz's strange attractor \cite{Via}, which also comes from simplifying convection equations.
As before, to narrow down the range of possibilities we look for a Lyapunov function, but it takes an additional technique to find the right ansatz. From previous sections, we know a Lyapunov function \eqref{Lyapunov1} for the first two equations when $z$ (and hence $L$) are fixed:
\begin{equation}\label{Lyapunov3D}
V(x,y,z):=\gamma x - \sigma\ln x + \gamma z\,y - \gamma z\,\frac{v}{\sigma}\ln y.
\end{equation}
But it may not be a Lyapunov function for the full system. Indeed, the derivative
$$
\dot{V}(x,y,z):=\nabla V\cdot(\dot{x},\dot{y},\dot{z})=-\frac{\gamma^2vz}{\sigma x}\left(x-\frac{\sigma}{\gamma}\right)^2-\gamma f(y)\left(y-\frac{v}{\sigma}\ln y\right)z^{2}\,$$
may not be non-positive in the entire first octant since the sign of $y-\frac{v}{\sigma}\ln y$ changes there. Fortunately, the last equation in \eqref{BVzmodel} and the shape of $f(y)$ imply that the function $W(x,y,z):=z$ is also a Lyapunov function! This just rephrases the fact that the length of the growing shoot never decreases. Since sums and positive multiples of non-positive derivatives are non-positive positive linear combinations of Lyapunov functions are good candidates for Lyapunov functions. Adding $m\gamma z$ to $V$, we obtain our new ansatz: $V_m(x,y,z):=V(x,y,z)+m\gamma z$, with $m$ to be determined. Its derivative along the trajectories is:
$$
\dot{V}_m(x,y,z)=-\frac{\gamma^2vz}{\sigma x}\left(x-\frac{\sigma}{\gamma}\right)^2-\gamma f(y)\left(y-\frac{v}{\sigma}\ln y+m\right)z^{2}\,.
$$
We can make it non-positive inside the entire first octant if we choose $m$ so that $y-\frac{v}{\sigma}\ln y+m\geq0$ for all $y>0$. This is possible because $y-\frac{v}{\sigma}\ln y$ has a global minimum for $y>0$. It is $\frac{v}{\sigma}(1-\ln\frac{v}{\sigma})$, and we can simply take any $m>\frac{v}{\sigma}(1-\ln\frac{v}{\sigma})$.

With this function, and a trapping region bound by its level set, the Lyapunov theorem tells us that the omega-limit set of a point inside the first octant is contained in 
$$
\dot{V}_m^{-1}(0)=\{(x,y,z)\,\big|\,z=0\textrm{ or }x=\frac{\sigma}{\gamma}, f(y)=0\}\,.
$$
Since $z(t)\geq0$ is monotone decreasing we also know that $z(t)\to z_*\geq0$. The case $z_*=0$ means that $L(t)\to\infty$, and this is the only option if $\frac{v}{\sigma}>y_f$, which is unbiological. But if  $\frac{v}{\sigma}<y_f$ and $z_*>0$ we can say more.
\begin{theorem}
Suppose $\frac{v}{\sigma}<y_f$ and $x_{0},y_{0}, L_0>0$. Then either $L(t)\to\infty$ (unbounded growth), or there is a stopping time $T_*>0$ and a final length $L_*$ such that $L(t)=L_*$ is constant for $t\geq T_*$.
In the latter case the nutrient and the hormone concentrations approach the equilibrium values $\frac{\sigma}{\gamma},\frac{v}{\sigma}$, respectively.
\end{theorem}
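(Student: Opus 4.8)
\section*{Proof proposal}

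The plan is to dichotomize on $z_* := \lim_{t\to\infty} z(t)$. This limit exists because $\dot z = -f(y)z^2 \le 0$ makes $z(t) = 1/L(t)$ non-increasing, and it is bounded below by $0$ (equivalently, $\frac{d}{dt}(1/z) = f(y) \ge 0$, so $L$ is non-decreasing). If $z_* = 0$ then $L(t) \to \infty$ and we are in the first alternative, with nothing further to prove. All the work is in the case $z_* > 0$, where we will establish the second alternative.

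Assume $z_* > 0$. First I would exhibit a compact trapping region. Since $V_m$ is a Lyapunov function, $p(t) = (x(t),y(t),z(t))$ stays inside the sublevel set $U := \{V_m \le V_m(p_0)\}$ for as long as it is defined. Writing $V_m = (\gamma x - \sigma\ln x) + \gamma z\bigl(y - \tfrac{v}{\sigma}\ln y + m\bigr)$, the first bracket is bounded below and the second term is non-negative by the choice of $m$; hence on $U$ the quantity $\gamma x - \sigma\ln x$ is bounded above, which confines $x(t)$ to a compact subinterval of $(0,\infty)$, and since $z(t) \ge z_*$, the quantity $y - \tfrac{v}{\sigma}\ln y + m$ is bounded above, confining $y(t)$ similarly. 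Together with $z_* \le z(t) \le z(0)$ this shows the trajectory lives in a compact subset of the open first octant; in particular it is defined for all $t \ge 0$ and $\omega(p_0)$ is non-empty, compact and flow-invariant.

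Next I would invoke the Lyapunov theorem (Theorem~\ref{Lyap}) with this $U$: $\omega(p_0) \subseteq \dot V_m^{-1}(0) = \{z=0\} \cup \{x = \tfrac{\sigma}{\gamma},\, f(y)=0\}$. Because $z(t) \to z_* > 0$, every limit point has $z$-coordinate $z_*$, so in fact $\omega(p_0) \subseteq \{x=\tfrac{\sigma}{\gamma},\ f(y)=0,\ z=z_*\}$. Now comes the crucial step: using flow-invariance of $\omega(p_0)$, compute the vector field on this line. At such a point $\dot y = \gamma\cdot\tfrac{\sigma}{\gamma}\cdot y - \sigma y = 0$ and $\dot x = z_*(v - \sigma y)$, so the trajectory remains on $x = \tfrac{\sigma}{\gamma}$ only if $y = \tfrac{v}{\sigma}$; note $\tfrac{v}{\sigma} < y_f$ guarantees $f(\tfrac{v}{\sigma}) = 0$, so this single point genuinely lies in the set. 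Hence $\omega(p_0) = \{(\tfrac{\sigma}{\gamma},\tfrac{v}{\sigma},z_*)\}$, and a bounded trajectory with a singleton omega-limit set converges to that point: $(x(t),y(t),z(t)) \to (\tfrac{\sigma}{\gamma},\tfrac{v}{\sigma},z_*)$.

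It remains to upgrade ``$L(t) \to 1/z_*$'' to ``$L$ eventually constant''. Since $y(t) \to \tfrac{v}{\sigma} < y_f$, there is $T_* > 0$ with $y(t) < y_f$, hence $f(y(t)) = 0$ and $\dot L(t) = 0$, for all $t \ge T_*$; thus $L(t) \equiv L_* := L(T_*)$ on $[T_*,\infty)$, and the nutrient and hormone concentrations converge to $\tfrac{\sigma}{\gamma}$ and $\tfrac{v}{\sigma}$, as claimed. The main obstacle, and the only step needing genuine thought, is the flow-invariance argument that collapses the candidate line $\{x = \tfrac{\sigma}{\gamma},\, f(y) = 0,\, z = z_*\}$ down to a single equilibrium --- exactly the maneuver used in the proof of Theorem~\ref{GlobAttr}, where one observes that trajectories slide off such a set unless a further coordinate equation also vanishes. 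Verifying properness of the trapping region and extracting convergence from a one-point omega-limit set are routine.
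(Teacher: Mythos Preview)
Your proof is correct and follows the same overall strategy as the paper: dichotomize on $z_*$, and for $z_*>0$ use the Lyapunov function $V_m$ together with LaSalle-type flow-invariance to pin down $\omega(p_0)$ to the single equilibrium, then observe that $y(t)\to v/\sigma<y_f$ forces $f(y(t))=0$ and hence $L$ eventually constant. You are in fact more careful than the paper on two points: you justify compactness of the trapping region explicitly (using $z(t)\ge z_*>0$ to bound the $y$-component), and you carry out the invariance reduction in full, whereas the paper simply asserts that ``trajectories approach $(\sigma/\gamma,\,v/\sigma,\,z_*)$'' and then, somewhat redundantly, re-derives the convergence of $(x,y)$ by reducing to the 2D system after $T_*$ and invoking Theorem~\ref{GlobAttr}. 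Your single 3D argument makes that final appeal unnecessary.
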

\begin{proof} If $z_*>0$ then $L(t)\leq L_*:=\frac1{z_*}$. Moreover, since the trajectories approach $(\frac{\sigma}{\gamma},\frac{v}{\sigma}, z_*)$, and $\frac{v}{\sigma}<y_f$, their $x$-$y$ projections must for large $t>0$ stay in a disk contained entirely under the threshold $y_f$. When this is so $L(t)$ is constant since $f(y)=0$. Therefore, there is the smallest time $T_*$ after which the growth stops, and then $L_*=L(T_*)$ is the final length.
After $T_*$ the dynamics of $x(t),y(t)$ is determined by the first two equations of \eqref{BVmodel} with $z=z_*$. Since $\gamma\delta-\alpha\sigma=\gamma vz_*>0$ Theorem \ref{GlobAttr} applies to them.
\end{proof}

Figure \ref{LXYplot} shows the growth pattern for different values of parameters, and typical behavior of the nutrient and the hormone concentrations. As one can see, the growth occurs in spurts interrupted by quiet periods. The system may spend a long time in a quiet period before resuming growth again. To make sure in simulations that the growth stops for good, we can use the Lyapunov function again. For every fixed value of $L$ (and $z$), the function $V_{m}(x,y,z)$ defines a family of Lotka-Volterra ovals in the $x$-$y$ plane. Consider the oval that touches the threshold line $y=y_{f}$ at the upper tip. Its equation can be determined from the fact that $(\frac{\sigma}{\gamma},y_{f})$ lies on it. If in the course of a simulation $V_{m}(x,y,z)\leq V_{m}(\frac{\sigma}{\gamma},y_{f},z)$, then growth can never resume, and $L_{*}=\frac{1}{z}$ is the final length ($m$ may be set to $0$ if $y_{m}<1$).
\begin{figure}[!ht]
\begin{centering}
(a)\ \ \ \includegraphics[width=0.43\textwidth]{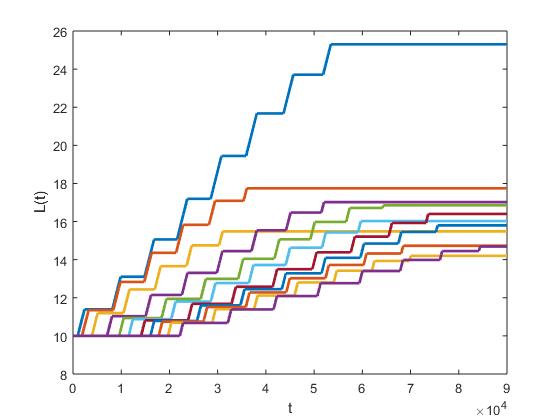} \hspace{0in} (b)\ \ \ \includegraphics[width=0.43\textwidth]{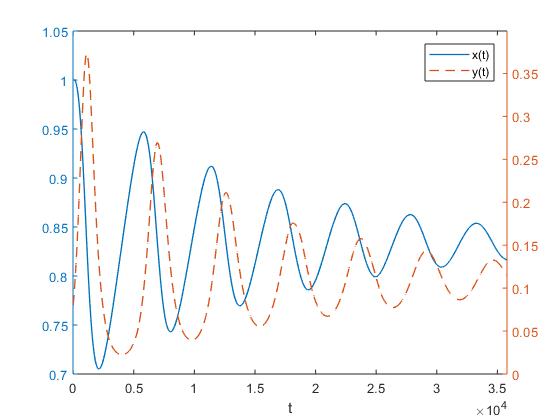}\par
\end{centering}
\caption{\label{LXYplot} Time evolution in \eqref{BVmodel}: (a) shoot length for different values of $\sigma$; (b) nutrient (solid) and hormone (dashed) concentrations.}
\end{figure}

This pattern was predicted also by the full Bessonov-Volpert model. Simulations show that for realistic initial values (small lengths and hormone concentrations) the growth does eventually stop. This raises many interesting problems, like finding explicit estimates for the final length and the stopping time in terms of the initial values. But answering them would probably require techniques beyond the use of Lyapunov functions.

\bigskip
\noindent{\large\bf Technology Used:} Simulations and figures were made in MATLAB. The \textit{pde45} function was used to solve systems of differential equations numerically. The \textit{plot} function was used for the 2D plots, the 3D plots were made with \textit{surf} (for surfaces) and \textit{plot3} (for trajectories). The vector fields in Figures \ref{SlopePP}(a), \ref{PredExt}(b), and \ref{VirFlow}(a) were plotted with the \textit{vectfieldn} package.

\bigskip
\noindent{\large\bf Acknowledgments:} The research for this paper 
was completed during the Research Experiences for Undergraduates (REU) program held at the University of Houston-Downtown in the summer of 2018. We would like to thank our fellow REU participants Dr.\,Youn-Sha Chan, Dr.\,Michael Tobin, Tomas Bryan, Roberto Hernandez and Michael Zhang for their helpful insights. This work was supported by NSF grant \#1560401.

\end{document}